\theoremstyle{plain}
\newtheorem{thm}{Theorem}[section]
\newtheorem{lem}[thm]{Lemma}
\newtheorem{obs}[thm]{Observation}
\newtheorem{prop}[thm]{Proposition}
\newtheorem{cor}[thm]{Corollary}
\newtheorem*{claim*}{Claim}
\newtheorem{que}[thm]{Question}
\theoremstyle{definition}
\newtheorem{defi}[thm]{Definition}
\newtheorem{defiandque}[thm]{Definition and Question}
\theoremstyle{remark}
\numberwithin{equation}{section}
\newcommand{\C}{\mathbb{C}}
\newcommand{\K}{\mathbb{K}}
\newcommand{\PP}{\mathcal{P}}
\newcommand{\I}{\mathcal{I}}
\newcommand{\Z}{\mathcal{Z}}
\newcommand{\id}{\textnormal{id}}
\DeclareMathOperator{\codim}{codim}
\title{Retract rational varieties are uniformly retract rational}
\author{Juliusz Banecki}
\address{Faculty of Mathematics and Computer Science,
Jagiellonian University, ul. Lojasiewicza 6, 30-348 Krakow, Poland}
\email{juliusz.banecki@student.uj.edu.pl}
\date{}
\subjclass[2000]{14E08,	14M20}
\keywords{retract rational variety, Gromov elliptic variety, uniformly rational variety}
\begin{document}

\begin{abstract}
We prove that nonsingular retract rational algebraic varieties over any infinite field are uniformly retract rational. As a consequence, every rational, projective, nonsingular complex variety is algebraically elliptic.
\end{abstract}
\maketitle

\section{Introduction}
Throughout this paper, let $\K$ denote an infinite field. An affine variety $X$ over $\K$ is a Zariski closed subset of $\K^n$ for some $n$. By $\I(X)$ we denote the ideal of polynomials in $\K[x_1,\dots,x_n]$ vanishing on $X$, and by $\mathcal{P}(X)$ we denote the coordinate ring $\K[x_1,\dots,x_n]/\I(X)$ of $X$. Given a point $x\in X$, by $\mathcal{P}(X)_{\mathfrak{m}_{x}}$ or just by $\PP(X)_{x}$ we denote the localisation of $\mathcal{P}(X)$ at the maximal ideal at the point $x$, which can be interpreted as the ring of germs of regular functions at $x$. We say that the point $x$ is nonsingular if the local ring at $x$ is regular. If $X$ is irreducible, we denote the field of rational functions on $X$ by $\K(X)$, it is canonically isomorphic to the field of fractions of $\mathcal{P}(X)$. We say that a rational function is regular at a point $x\in X$ if it belongs to $\mathcal{P}(X)_{x}$ as an element of the field of fractions of the local ring.

Likewise, by an algebraic variety over $\K$ we mean a Zariski locally closed subset of the projective space $\K\mathbb{P}^n$ over $\K$ for some $n$. In the case that $\K$ is the field of complex (resp. real) numbers, we will say that the variety is complex (resp. real).

We can now begin with some historical background on the study of certain classes of algebraic varieties, started by Gromov. In his foundational paper \cite{Gromov}, he introduced the following notion, which is now called \emph{algebraic Gromov ellipticity}:
\begin{defi}
Let $X$ be a complex nonsingular algebraic variety. The variety $X$ is said to be \emph{algebraically elliptic} if there exists an algebraic vector bundle $E$ over $X$ and a regular mapping $s:E\rightarrow X$ such that
\begin{enumerate}
    \item $s(0_x)=x$ for $x\in X$, where $x\mapsto 0_x$ is the zero section,
    \item the derivative of $s\vert_{E_x}$ is surjective at $0_x$ as a mapping into $T_x X$, where $E_x$ is the fiber of the point $x$.
\end{enumerate}
\end{defi}

Gromov was able to provide some examples of complex algebraic varieties which are algebraically elliptic. In particular he noted that in the projective case, algebraic ellipticity follows from \emph{uniform rationality}, defined as follows:
\begin{defi}
An algebraic variety $X$ over an infinite field $\K$ of dimension $n$ is called \emph{uniformly rational} if for every point $x\in X$ there exists a Zariski open neighbourhood $V$ of $x$ which is biregularly isomorphic to a Zariski open subset of $\K^n$.
\end{defi}

In this context it is then natural to ask the following question:
\begin{que}[{\cite[p. 883 and 885]{Gromov}}]\label{que_Gromov}
Are all nonsingular rational algebraic varieties uniformly rational? In particular are all nonsingular rational complex projective varieties algebraically elliptic?
\end{que}
Some consequences of a positive answer to the first part of the question were analysed in \cite{Bogomolov}. The question has been restated in different versions in \cite{Zaidenberg3,Zaidenberg2,Zaidenberg_survey}.

Similar problems have also been considered in the real case. In the paper \cite{real_homogeneous}, Bochnak and Kucharz introduced the notion of \emph{malleable} varieties, which is the real counterpart to the class of algebraically elliptic varieties:
\begin{defi}
Let $X$ be a nonsingular real algebraic variety. The variety $X$ is said to be malleable if there exists an algebraic vector bundle $E$ over $X$ (see \cite[Definition 12.1.6]{RAG} for the definition of an algebraic vector bundle in the real case), and a regular mapping $s:E\rightarrow X$ such that
\begin{enumerate}
    \item $s(0_x)=x$ for $x\in X$, where $x\mapsto 0_x$ is the zero section,
    \item the derivative of $s\vert_{E_x}$ is surjective at $0_x$ as a mapping into $T_x X$, where $E_x$ is the fiber of the point $x$.
\end{enumerate}
\end{defi}

Building on the work of Bochnak and Kucharz, the author introduced the following definition in \cite{RelativeSW}:
\begin{defi}
An algebraic variety over an infinite field $\K$ is called \emph{uniformly retract rational} if for every point $x\in X$ there exists a Zariski open neighbourhood $V$ of $x$, a natural number $m$, a Zariski open set $U\subset \K^m$ and two regular mappings
\begin{equation*}
    V\rightarrow U\rightarrow V 
\end{equation*}
such that their composition is the identity on $V$.
\end{defi}
This definition begins to play an important role in the study of homotopy and approximation properties of regular mappings between real algebraic varieties as seen in the papers \cite{RelativeSW,bilski+kucharz,spaces_of_maps}.

With the existing machinery it is easy to prove the following
\begin{obs}\label{obs_urr=>Ge}
Complex projective (resp. real algebraic)  uniformly retract rational varieties are algebraically elliptic (resp. malleable).
\end{obs}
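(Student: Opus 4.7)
The plan is to convert the local retract-rational data at each point of $X$ into a single global dominating spray, that is, an algebraic vector bundle $E \to X$ together with a regular map $s \colon E \to X$ satisfying the two conditions of algebraic ellipticity (resp. malleability). Since $X$ is Noetherian, I first extract a finite Zariski open cover $V_1, \ldots, V_k$ of $X$ together with, for each $i$, an open subset $U_i \subset \K^{m_i}$ and regular maps $\iota_i \colon V_i \to U_i$, $\rho_i \colon U_i \to V_i$ with $\rho_i \circ \iota_i = \id_{V_i}$.

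Next, on each $V_i$ I would produce a local partial spray. The set $W_i = \{(v,w) \in V_i \times \K^{m_i} : \iota_i(v) + w \in U_i\}$ is an open neighbourhood of the zero section $V_i \times \{0\}$ in the trivial bundle, and the regular map
\[
s_i \colon W_i \to V_i, \qquad s_i(v,w) = \rho_i\bigl(\iota_i(v) + w\bigr),
\]
satisfies $s_i(v,0) = v$; its fibre derivative at the zero section equals $d\rho_i(\iota_i(v))$, which is surjective because $d\rho_i \circ d\iota_i = \id_{T_v V_i}$. Thus every $V_i$ comes equipped with a candidate spray datum, defined at least near the zero section and dominating there.

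The third and decisive step is to assemble the local partial sprays into a single global dominating spray on $X$ defined on a single algebraic vector bundle. In the complex projective case, I would appeal to Gromov's composition-of-sprays technique from \cite{Gromov}, which leverages the properness of $X$ to reconcile the fact that the $s_i$ are only partially defined and live over different $V_i$. In the real algebraic case, I would follow the strategy of Bochnak and Kucharz in \cite{real_homogeneous}: using real-algebraic cut-off functions built from $\tfrac{1}{1 + \sum_j f_j^2}$, with the $f_j$ generating the ideal of $\K^{m_i} \setminus U_i$, to first promote each $s_i$ to a full spray on $V_i \times \K^{m_i}$, and then performing a standard gluing.

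I expect this third step to be the main obstacle. The local partial sprays are essentially immediate from the retract data, but combining them into a single regular $s \colon E \to X$ on a vector bundle over all of $X$ is exactly what "the existing machinery" of the statement refers to, and relies crucially on properness in the complex projective case and on the availability of real-algebraic cut-off constructions in the real case.
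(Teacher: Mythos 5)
Your first two steps are a natural starting point and are essentially correct as far as they go: the map $s_i(v,w)=\rho_i(\iota_i(v)+w)$ does have the right derivative at the zero section. However, there is a genuine gap in your decisive third step, and it is exactly where you flagged the difficulty. A (local) dominating spray must be a regular map defined on an \emph{entire} algebraic vector bundle, not merely on a Zariski neighbourhood $W_i$ of the zero section. Gromov's composition-of-sprays technique (and its relatives, such as subellipticity) takes as input honest sprays on vector bundles over all of $X$ and does not resolve the partial-definedness of your $s_i$. In the real algebraic case one can indeed cut off using $1/(1+\sum f_j^2)$ to extend a partial spray; that is why the paper simply cites the earlier result \cite[Theorem~2.11]{RelativeSW}. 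But in the complex setting there are no such cut-offs, so your $s_i$ cannot be promoted to a full spray over $V_i$ this way, and the step "then performing a standard gluing" is not available.

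The paper's route avoids this entirely and is worth comparing to yours. Instead of gluing, it invokes the local-to-global criterion of \cite[Theorem~1.1]{Zaidenberg2}: it suffices to exhibit, at each point $x_0$, a \emph{local dominating spray} $(E',s',V)$, meaning a genuine spray $s'\colon E'\to X$ on a bundle over a neighbourhood $V$ that dominates at $x_0$. To build it, the paper uses projectivity of $X$ in a different way than your properness remark: the retraction $r\colon U\to V\subset X$ extends to $r'\colon U'\to X$ with $\codim(\C^m\setminus U')\geq 2$, and by \cite[Proposition~6.4.1]{forstneric} such a complement $U'$ is already known to be algebraically elliptic, hence carries a \emph{global} dominating spray $s\colon E\to U'$. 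Pulling $E$ back along $i\colon V\to U\subset U'$ and post-composing $s$ with $r'$ gives a spray $s'\colon i^*E\to X$ defined on the whole pulled-back bundle, which is exactly the local dominating spray needed. This sidesteps both of your obstacles: no cut-offs are required, and no gluing is performed by hand. Your sketch would need a different mechanism to turn partial sprays into honest ones before any composition argument could start; the paper's codimension-two extension plus the known ellipticity of $U'$ supplies that mechanism.
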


We postpone the proof of Observation \ref{obs_urr=>Ge} until the end of this section. 

Naturally, one can ask an analogue of Question \ref{que_Gromov} in the retract rational case:
\begin{defiandque}
An algebraic variety $X$ over an infinite field $\K$ is called retract rational if there exists a Zariski open and dense subset $V\subset X$, a natural number $m$, a Zariski open set $U\subset \K^m$ and two regular mappings
\begin{equation*}
    V\rightarrow U\rightarrow V 
\end{equation*}
such that their composition is the identity on $V$.

\emph{Are all nonsingular retract rational algebraic varieties uniformly retract rational?}
\end{defiandque}

The goal of this paper is to answer the question affirmatively:
\begin{thm}\label{main_thm}
Let $X$ be a nonsingular retract rational algebraic variety over an infinite field $\K$. Then it is uniformly retract rational.
\end{thm}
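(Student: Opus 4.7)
The conclusion is Zariski-local in nature: the set of points $x\in X$ admitting a Zariski open neighborhood $W$ with a retract structure $W\to U'\to W$ (for some $m'$ and some open $U'\subset\K^{m'}$) is automatically Zariski open on $X$, since any such $W$ witnesses the property simultaneously for every one of its points. By retract rationality, this open set contains the dense open $V\subset X$ supplied by hypothesis, so the new content is to handle each $x\in X\setminus V$. For such an $x$, the plan is to construct a Zariski open $W\ni x$ together with a Zariski-local isomorphism $\phi\colon W\xrightarrow{\sim}V_0$ onto a Zariski open $V_0\subset V$; then the restricted retract structure $V_0\to s^{-1}(V_0)\to V_0$ transports via $\phi$ to the desired retract on $W$, with ambient open $s^{-1}(V_0)\subset U\subset\K^m$.

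To construct $\phi$, I would combine two ingredients. First, nonsingularity at $x$ furnishes a system of local regular parameters $t_1,\dots,t_n\in\PP(X)_x$ and hence an étale morphism $\pi\colon W\to\K^n$ from some Zariski open $W\ni x$, sending $x$ to $0$. Second, the retract $V\xrightarrow{p}U\xrightarrow{s}V$ yields, for any chosen $v_0\in V$ with $u_0:=p(v_0)$, a section-like morphism as follows: the identity $s\circ p=\id_V$ forces $ds_{u_0}$ to be surjective with kernel of dimension $m-n$, so a generically chosen $n$-dimensional affine subspace $L\subset\K^m$ through $u_0$ is transverse to $\ker(ds_{u_0})$, and upon fixing an affine isomorphism $L\cong\K^n$ the composition $\K^n\cong L\cap U\xrightarrow{s}V$ is étale at $0$ and sends $0$ to $v_0$. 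Composing with $\pi$ yields $\phi\colon W\to V$, étale at $x$, with $\phi(x)=v_0$.

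The principal obstacle, and what I expect to be the technical heart of the proof, is to promote $\phi$ from being only étale at $x$ to being a Zariski-local isomorphism; étale morphisms of varieties of the same dimension are not Zariski-local isomorphisms in general, as the generic degree of $\phi$ (the product of those of $\pi$ and $s|_L$) may exceed one. One natural route is to arrange birationality of $\phi$ onto its image by a coordinated choice of $\pi$ and $L$, possibly after enlarging the target $\K^m$ with auxiliary functions drawn from the retract data that separate the sheets of $\phi$; once birationality is secured, Zariski's Main Theorem upgrades $\phi$ to a local isomorphism on a (possibly smaller) Zariski open neighborhood of $x$. An alternative strategy bypasses local isomorphisms altogether by constructing the retract on $W$ directly: embed $W$ into a suitably enlarged affine space via $\pi$ together with additional regular functions separating the sheets, and define a section on a Zariski open around the image by blending $s$ with an ambient projection. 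Either route exploits the flexibility afforded by the retract structure on $V$ to overcome the rigidity of the Zariski topology at $x$.
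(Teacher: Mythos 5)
Your reduction to a local question is correct: the set of points admitting a local retract structure is Zariski open and contains $V$, so the task is indeed to handle $x\in X\setminus V$. However, the proposed construction has a genuine gap, and you flag it yourself: the map $\phi$ obtained by composing the \'etale chart $\pi$ with $s|_{L\cap U}$ is only \'etale at $x$, and \'etale morphisms of equidimensional varieties need not be Zariski-local isomorphisms. Neither of the two sketched repairs is carried out, and the first one (force birationality by auxiliary functions, then invoke Zariski's Main Theorem) is, I believe, not workable as stated: you are attempting to produce a Zariski-local isomorphism from a neighborhood of $x$ onto an open subset of $V$, which is a strictly stronger statement than the theorem requires and which there is no a priori reason to expect to hold for a retract rational variety. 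The whole difficulty of the theorem sits exactly at this unresolved step.

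The paper's actual argument sidesteps local isomorphisms entirely. It works with the rational map $F:=r\circ i_0\colon\K^n\dashrightarrow X$, where $i_0$ is a coordinatewise extension of $i$ to a Zariski open subset of the ambient $\K^n$. The key observation is that although $F$ itself need not be regular at $x_0$, its restriction $F|_X$ equals the identity on $V$ and hence is regular at $x_0$. Proposition~\ref{main_prop} then produces a regular germ $G=F\circ\psi\colon(\K^n,x_0)\to X$ with $G|_X=F|_X=\id$ near $x_0$; restricting $G$ to a small enough neighborhood $U'$ of $x_0$ in $\K^n$ and taking $V'=X\cap U'$ gives the retract $V'\hookrightarrow U'\xrightarrow{G} V'$ directly. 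The substance lies in choosing $\psi$ so as to cancel the denominators of $F$ along $X$: this is done by building a ``spreading'' map $\sigma\colon X\times\K^{n-m}\to\K^n$ via Noether normalization, a generic transversal slice, and Nakayama's lemma (Proposition~\ref{alg_prop}), together with the denominator-propagation estimate of Lemma~\ref{denom_reduction}. This denominator-clearing mechanism is the heart of the proof and is absent from your proposal; the \'etale/ZMT route you outline does not supply a substitute for it.
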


As a corollary we obtain a partial answer to Question \ref{que_Gromov}:
\begin{cor}
Nonsingular rational complex projective varieties are algebraically elliptic.
\begin{proof}
Clearly, rational varieties are retract rational. Then according to Theorem \ref{main_thm} they are uniformly retract rational, and hence thanks to Observation \ref{obs_urr=>Ge} they are algebraically elliptic.
\end{proof}
\end{cor}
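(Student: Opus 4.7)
The plan is to deduce the corollary as a short chain of results already established or assumed in the excerpt. Given a nonsingular rational complex projective variety $X$, I would first verify that $X$ is retract rational. By the very definition of rationality, there exists a dense Zariski open subset $V \subset X$ which is biregularly isomorphic to a dense Zariski open subset $U \subset \C^m$ via some map $\varphi \colon V \to U$. Taking $\varphi$ for the first arrow and its inverse $\varphi^{-1}\colon U \to V$ for the second arrow in the definition of retract rationality, their composition is the identity on $V$, so $X$ qualifies as retract rational.

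Next, I would invoke Theorem \ref{main_thm}: since $X$ is a nonsingular retract rational algebraic variety over the infinite field $\C$, it is uniformly retract rational. Finally, because $X$ is in particular a complex projective uniformly retract rational variety, Observation \ref{obs_urr=>Ge} yields that $X$ is algebraically elliptic, which is the desired conclusion.

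There is essentially no obstacle here: the corollary is a direct combination of the definition of rationality, the main theorem of the paper, and the preliminary observation \ref{obs_urr=>Ge}. The only piece of content is the tautological observation that a birational isomorphism between open subsets of $X$ and $\C^m$ immediately supplies the retract data; everything else is a mechanical chain of implications, with all of the substantive work packaged into Theorem \ref{main_thm}.
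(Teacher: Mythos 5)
Your proposal is correct and follows exactly the same route as the paper: rational implies retract rational (by taking the birational isomorphism and its inverse as the retract data), then Theorem \ref{main_thm} gives uniform retract rationality, and Observation \ref{obs_urr=>Ge} gives algebraic ellipticity. The only difference is that you spell out the (trivial) first step, which the paper dismisses with ``clearly.''
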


In the end, the known implications between the different properties of an irreducible nonsingular complex (resp. real) projective variety $X$ now look as follows:
\begin{center}
\begin{tikzcd}
\text{uniformly rational} \arrow[d, Rightarrow]                                             &                                                         \\
\text{rational} \arrow[d, Rightarrow]                                                       &                                                         \\
\text{retract rational} \arrow[r, Leftrightarrow] & \text{uniformly retract rational} \arrow[d, Rightarrow] \\
                                                                                            & \text{algebraically elliptic (resp. malleable)}                     
\end{tikzcd}
\end{center}
See also the diagram in \cite[p. 5]{RelativeSW} to get a more complete picture in the real case.

It is worth noting that there exist complex projective algebraically elliptic varieties which are not rational. Indeed, by \cite[Theorem 1.1]{cubic}, all nonsingular cubic hypersurfaces in $\C\mathbb{P}^4$ are algebraically elliptic, while their irrationality follows from the classical result of Clemens and Griffiths \cite{clemens-griffiths}.

There also exists an example of a complex affine variety that is algebraically elliptic but not retract rational. In \cite[Theorem 3.6]{Saltman1984}, Saltman constructed a homogeneous space of the group $\mathrm{SL}(n,\C)$ for some $n$, which is not retract rational. As noted in \cite[p. 2]{cubic}, this variety is nonetheless algebraically elliptic.

\begin{proof}[Proof of Observation \ref{obs_urr=>Ge}]
In the real case this has already been established in \cite[Theorem 2.11]{RelativeSW}.

Let then $X$ be a complex projective uniformly retract rational variety. Fix a point $x_0\in X$ and its Zariski open neighbourhood $V\subset X$ such that there exists a Zariski open set $U\subset \C^m$ and two regular mappings $i:V\rightarrow U,r:U\rightarrow V$ such that $r\circ i=\id_V$. Since $X$ is projective, the mapping $r$ admits an extension to a regular mapping $r':U'\rightarrow X$ defined on a Zariski open set $U'\subset\C^m$, with $\codim \C^m\backslash U'\geq 2$. Then according to \cite[Proposition 6.4.1]{forstneric}, $U'$ is an algebraically elliptic variety. Let $E$ be an algebraic vector bundle over $U'$ with a dominating spray $s:E\rightarrow U'$. We define a regular mapping $s'$ from the pulled back bundle $E':=i^\ast E$ into $X$ by
\begin{equation*}
    s'(v,x):=r'\circ s(v).
\end{equation*}
It is clear from the definition that 
\begin{enumerate}
    \item $s'(0_x)=x$ for $x\in V$,
    \item the derivative of $s'\vert_{E'_{x_0}}$ at the point $0_{x_0}$ is surjective.
\end{enumerate}
 
A triple $(E',s',V)$ satisfying these properties is called a local dominating spray at $x_0$. According to \cite[Theorem 1.1]{Zaidenberg2}, a variety admitting a local dominating spray at each of its points is algebraically elliptic. As the point $x_0$ was arbitrary the conclusion follows.
\end{proof}

\section{Preliminaries}
From now on let $\K$ be a fixed infinite field. We denote by $\overline{\K}$ its algebraic closure. Given a closed subvariety $X$ of $\K^n$, by $X^{\overline{\K}}$ we denote the Zariski closure of $X$ in $\overline{\K}^n$. Given an ideal $I\subset \PP(X)$, by $\Z(I)$ we denote its zero set in $X$, while by $\Z^{\overline{\K}}(I)$ we denote the zero set in $X^{\overline{\K}}$ of its extension in the ring $\PP(X^{\overline{\K}})$. Given two closed subvarieties $X\subset \K^n,Y\subset \K^m$ of $\K^n$ and $\K^m$ respectively and a polynomial mapping $f:X\rightarrow Y$, we denote by $f^{\overline{\K}}:X^{\overline{\K}}\rightarrow Y^{\overline{\K}}$ its natural polynomial extension. Given a variety $X$, we say that a property is true for a generic point $x\in X$ if it is satisfied on a Zariski open and dense subset of $X$.

We begin with the following observation:
\begin{obs}\label{obs_flat}
Let $X\subset \K^n, Y\subset \K^m$ be two nonsingular affine varieties over $\K$. Let $\sigma:X\rightarrow Y$ be a regular mapping such that its derivative at a point $x_0\in X$ is an isomorphism (considered as a mapping between the Zariski tangent spaces of $X$ at $x_0$ and $Y$ at $\sigma(x_0)$). Then, the induced mapping 
\begin{equation*}
    \PP(Y)_{\sigma(x_0)}\rightarrow\PP(X)_{x_0}
\end{equation*}
is injective. Moreover, if $P,Q\in\PP(Y)_{\sigma(x_0)}$ are such that $Q\circ \sigma$ divides $P\circ \sigma$ in the ring $\PP(X)_{x_0}$, then $Q$ divides $P$ in $\PP(Y)_{\sigma(x_0)}$.
\begin{proof}
It follows from the assumption  that $\sigma$ induces an isomorphism between the formal completions of $\PP(Y)_{\sigma(x_0)}$ and $\PP(X)_{x_0}$. Injectivity follows from the fact that the natural map from $\PP(Y)_{\sigma(x_0)}$ to its completion is injective. The second part follows by a similar argument; if $Q\circ \sigma$ divides $P\circ \sigma$ in $\PP(X)_{x_0}$, then $Q$ divides $P$ in the completion of $\PP(Y)_{\sigma(y_0)}$, and as the completion is flat over $\PP(Y)_{\sigma(x_0)}$ the quotient must belong to $\PP(Y)_{\sigma(x_0)}$.
\end{proof}
\end{obs}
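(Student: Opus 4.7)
The plan is to reduce both assertions to statements about the formal completions of the two local rings, where everything becomes transparent because both completions turn out to be formal power series rings of the same dimension. Write $A = \PP(Y)_{\sigma(x_0)}$ and $B = \PP(X)_{x_0}$; by nonsingularity these are regular local Noetherian rings with residue field $\K$ (using that $x_0$ and $\sigma(x_0)$ are $\K$-rational points), and the hypothesis that $d\sigma_{x_0}$ is an isomorphism of Zariski tangent spaces forces $\dim A = \dim B$.

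The key step is to show that the induced local map $\hat\sigma^\ast\colon \hat A\to \hat B$ on completions is an isomorphism. The tangent space hypothesis dualizes to the statement that $\sigma^\ast$ induces an isomorphism $\mathfrak m_A/\mathfrak m_A^2 \to \mathfrak m_B/\mathfrak m_B^2$; by a standard argument combining completeness of $\hat B$ with Nakayama's lemma, this forces $\hat\sigma^\ast$ to be surjective. Since both completions are formal power series rings over $\K$ in $\dim A = \dim B$ variables, any surjection between them is automatically bijective for dimension reasons.

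Injectivity of $\sigma^\ast\colon A\to B$ now follows immediately: the canonical map $A\to \hat A$ is injective by Krull's intersection theorem, and the composition $A\to \hat A \xrightarrow{\hat\sigma^\ast} \hat B$ factors as $\sigma^\ast$ followed by the completion map of $B$, so $\sigma^\ast$ must be injective. For the divisibility claim, assume $\sigma^\ast(Q)$ divides $\sigma^\ast(P)$ in $B$; then the same divisibility holds in $\hat B$, and transporting back through $(\hat\sigma^\ast)^{-1}$ yields $Q \mid P$ in $\hat A$. The conclusion $Q\mid P$ in $A$ then follows from faithful flatness of $A\to\hat A$: ideals of $A$ contract from their extensions, so $P\in (Q)\hat A\cap A = (Q)A$.

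I do not anticipate any serious obstacle. The only mildly technical point is verifying that a local homomorphism of regular local rings inducing an isomorphism on cotangent spaces induces an isomorphism on completions, but this is a textbook Nakayama-and-completeness argument and can also be seen directly by choosing regular systems of parameters and observing that the resulting formal change of variables has an invertible linear part.
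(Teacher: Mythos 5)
Your argument is correct and is essentially the same as the paper's. The paper simply asserts that $\sigma$ induces an isomorphism on formal completions and then uses injectivity and flatness of the map $A\to\hat A$; you fill in the (standard) details of why the cotangent-space isomorphism yields an isomorphism of completions, correctly invoke Krull's intersection theorem for injectivity of $A\to\hat A$, and correctly sharpen ``flat'' to ``faithfully flat'' to justify the ideal contraction $(Q)\hat A\cap A=(Q)A$ in the divisibility step. No discrepancy in substance.
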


The following proposition will play an important role in our proof of Theorem \ref{main_thm}:
\begin{prop}\label{alg_prop}
Let $X\subset \K^n$ be a Zariski closed irreducible nonsingular subvariety of $\K^n$ of dimension $m$, and let $x_0\in X$. Let $I\subset \PP(X)$ be a nonzero ideal such that $I\subset \mathfrak{m}_{x_0}$, by the same symbol we denote the ideal generated by $I$ in the ring $\PP(X)[t_1,\dots,t_{n-m}]$, where $t_1,\dots,t_{n-m}$ are new variables. Then, there exists a polynomial mapping $\sigma:X\times \K^{n-m}\rightarrow \K^n$, such that
\begin{enumerate}
    \item $\sigma(x,0)=x$ for $x\in X$,
    \item the derivative of $\sigma$ at $(x_0,0)$ is an isomorphism,
    \item the homomorphism induced by pre-composing with $\sigma$ and then taking the quotient modulo the product ideal $I(t_1,\dots,t_{n-m})$
    \begin{equation*}
        \PP(\K^n)_{x_0}\rightarrow (\PP(X)[t_1,\dots,t_{n-m}]/I(t_1,\dots,t_{n-m}))_{\mathfrak{m}_{(x_0,0)}}
    \end{equation*}
    is surjective.
\end{enumerate}
\begin{proof}
Without loss of generality we can assume $x_0=0\in\K^n$. Let $W$ be a generic vector subspace of $\K^n$ of codimension $m$.

We can assume that it intersects $X$ transversally at zero. Moreover, by a simple dimension counting argument (see Lemma \ref{generic} below), we can assume that $\Z^{\overline{\K}}(I)\cap W^{\overline{\K}}=\{0\}$. 

Let $x_1,\dots,x_n$ be a basis of the dual space of $\K^n$ such that $W$ is given by the equation $x_{n-m+1}=\dots=x_n=0$. From Noether normalisation lemma we can assume that
\begin{equation}\label{finite_ext_eq}
	\PP(X) \text{ is integral over }\K[\bar{x}_1,\dots,\bar{x}_m],
\end{equation} 
where $\bar{x}_i$ denotes the restriction $x_i+\I(X)\in\PP(X)$ of $x_i$ to $X$.

For $1\leq i\leq k$ let $t_i:=x_i+\I(W)\in\PP(W)$. The linear system of parameters $t_1,\dots,t_{n-m}$ allows for a natural identification of $W$ with $\K^{m-n}$. Under this identification define
\begin{align*}
	&\sigma:X\times W\rightarrow \K^n,\\
	&\sigma(x,v):=x+v.
\end{align*}  
It is clear that under such a definition of $\sigma$, the properties $(1)$ and $(2)$ are satisfied.

Consider the ring extension $\PP(\K^n)\subset \PP(X\times W)$ induced by $\sigma$. Identifying $\PP(X)$ with its image under the natural inclusion $\PP(X)\subset \PP(X\times W)$ for $n-m< i \leq n$ we have that $\bar{x}_i= x_i\in \PP(\K^n)$, so from \eqref{finite_ext_eq} we deduce that all the elements of $\PP(X)$ are integral over $\PP(\K^n)$. Since for $1\leq i \leq m$ we have $t_i+\bar{x}_i= x_i\in \PP(\K^n)$, it follows that the entire ring $\PP(X\times W)$ is integral over $\PP(\K^n)$. As it is finitely generated as an algebra, it is finitely generated as a module over $\PP(\K^n)$. 

Denote by $\mathfrak{n}$ the maximal ideal of functions vanishing at the point $(0,0)$ in $\PP(X\times W)$ and by $\mathfrak{m}=\mathfrak{n}\cap \PP(\K^n)$ the maximal ideal of functions vanishing at the point $0$ in $\PP(\K^n)$. 

Note that if $(x,v)\in \Z^{\overline{\K}}(I(t_1,\dots,t_{n-m}))\subset X^{\overline{\K}}\times W^{\overline{\K}}$ is a point with $\sigma^{\overline{\K}}(x,v)=0$, then either $x\in \Z^{\overline{\K}}(I)\cap W^{\overline{\K}}=\{0\}$, so $x=0$ and $v=0$, or $v\in \Z^{\overline{\K}}(t_1,\dots,t_m)=\{0\}$, so again $v=0$ and $x=0$. This shows that 
\begin{equation*}
    \Z^{\overline{\K}}(\mathfrak{m}\PP(X\times W) +I(t_1,\dots,t_{n-m}))=\{(0,0)\}.
\end{equation*}
In algebraic terms this means that $\mathfrak{n}$ is the only associated prime of $\mathfrak{m}\PP(X\times W) +I(t_1,\dots,t_{n-m})$.

Now, from point $(2)$ we deduce that elements of $\mathfrak{m}$ generate the maximal ideal of the local ring $\PP(X\times W)_{\mathfrak{n}}$. This means that for every $f\in\mathfrak{n}$ there is $q\in\PP(X\times W)\backslash\mathfrak{n}$ such that
\begin{equation*}
	qf\in \mathfrak{m}\PP(X\times W)\subset \mathfrak{m}\PP(X\times W) +I(t_1,\dots,t_{n-m}).
\end{equation*}
As the latter ideal is $\mathfrak{n}$-primary we get that 
\begin{equation}\label{ideal_equal_eq}
	\mathfrak{m}\PP(X\times W) +I(t_1,\dots,t_{n-m})=\mathfrak{n}.
\end{equation}

The ideal $I(t_1,\dots,t_{n-m})$ is a submodule of $\PP(X\times W)$. Consider the quotient module $M:=\PP(X\times W)/(I(t_1,\dots,t_{n-m})+\PP(\K^n))$. From \eqref{ideal_equal_eq} we deduce that 
\begin{equation*}
	M\subset M\mathfrak{m}.
\end{equation*}
Hence, it follows from Nakayama's lemma that $M_{\mathfrak{m}}=0$. This is equivalent to $(3)$.
\end{proof}
\end{prop}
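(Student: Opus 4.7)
After translating we may assume $x_0=0$. The shape of conditions (1) and (2) --- $\sigma$ fixes $X\times\{0\}$ pointwise and induces an isomorphism of tangent spaces at the origin --- naturally suggests $\sigma$ of the simplest affine form, $\sigma(x,v):=x+v$, after identifying $\K^{n-m}$ linearly with a codimension-$m$ subspace $W\subset\K^n$ transverse to $T_{0}X$. Conditions (1) and (2) then hold automatically for any such $W$, so the entire content of the proposition is (3).

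For (3), my plan is to choose $W$ so that $\sigma$ is finite. After a generic linear change of coordinates on $\K^n$ adapted to $W$, Noether normalisation applied to $X$ lets me arrange the ring extension $\sigma^*:\PP(\K^n)\rightarrow\PP(X\times W)$ to be module-finite. Then the $\PP(\K^n)$-module
\begin{equation*}
    M:=\PP(X\times W)/\bigl(I(t_1,\dots,t_{n-m})+\sigma^*\PP(\K^n)\bigr)
\end{equation*}
is finitely generated, and the surjectivity asserted in (3) becomes the vanishing $M_{\mathfrak{m}}=0$, where $\mathfrak{m}$ is the maximal ideal of $0\in\K^n$. By Nakayama's lemma, this reduces in turn to the ideal-theoretic identity
\begin{equation*}
    \mathfrak{m}\PP(X\times W)+I(t_1,\dots,t_{n-m})=\mathfrak{n}
\end{equation*}
in the localisation at the maximal ideal $\mathfrak{n}$ of $(0,0)\in X\times W$.

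For this last equality I proceed in two steps. By (2), the extension of $\mathfrak{m}$ already generates the maximal ideal of $\PP(X\times W)_{\mathfrak{n}}$ (the tangent-space isomorphism giving an isomorphism of completed local rings, as in Observation \ref{obs_flat}), so it suffices to prove that the displayed left-hand ideal is $\mathfrak{n}$-primary, i.e., that its zero locus over $\overline{\K}$ is $\{(0,0)\}$. A point $(x,v)\in X^{\overline{\K}}\times W^{\overline{\K}}$ in that locus satisfies $\sigma^{\overline{\K}}(x,v)=0$, hence $v=-x$, and in addition either $x\in\Z^{\overline{\K}}(I)$ or $v=0$; either alternative forces $(x,v)=(0,0)$ once we know
\begin{equation*}
    \Z^{\overline{\K}}(I)\cap W^{\overline{\K}}=\{0\}.
\end{equation*}
Since $\Z^{\overline{\K}}(I)$ is a proper closed subset of the irreducible $m$-dimensional variety $X^{\overline{\K}}$, it has dimension at most $m-1$, so the above intersection condition holds for a generic codimension-$m$ linear subspace of $\K^n$. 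I expect the main technical obstacle to be the simultaneous imposition of three genericity conditions on $W$ --- transversality to $T_0 X$, compatibility with a Noether normalisation of $X$, and the displayed intersection condition --- but each is a nonempty Zariski-open condition on the Grassmannian, and the infiniteness of $\K$ permits a common $W$ defined over $\K$ to be chosen.
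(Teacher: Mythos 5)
Your proposal follows exactly the paper's strategy: choose a generic codimension-$m$ subspace $W$ transversal to $X$ with $\Z^{\overline{\K}}(I)\cap W^{\overline{\K}}=\{0\}$ (the paper isolates this as Lemma~\ref{generic}) and compatible with Noether normalisation; set $\sigma(x,v)=x+v$; deduce module-finiteness of $\PP(X\times W)$ over $\PP(\K^n)$; establish the ideal equality $\mathfrak{m}\PP(X\times W)+I(t_1,\dots,t_{n-m})=\mathfrak{n}$ by combining $\mathfrak{n}$-primariness with the fact that $\mathfrak{m}$ generates $\mathfrak{n}\PP(X\times W)_{\mathfrak{n}}$; conclude via Nakayama. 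This matches the paper's proof in both structure and all substantive steps, differing only in minor expository details such as the order in which primariness and local generation are invoked.
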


\begin{lem}\label{generic}
Let $X\subset \K^n$ be a Zariski closed irreducible subvariety of $\K^n$ of dimension $m$ containing the origin and let $I\subset \PP(X)$ be a nonzero ideal with $0\in\Z(I)$. Then, a generic element $W$ of the Grassmannian $\mathrm{Gr}_{n-m}(\K^n)$ satisfies
\begin{equation*}
   W^{\overline{\K}}\cap \Z^{\overline{\K}}(I)=\{0\}.  
\end{equation*}
\begin{proof}
It suffices to consider the case when $\K$ itself is algebraically closed, since $\mathrm{Gr}_{n-m}(\K^n)$ is irreducible and it is Zariski dense in $\mathrm{Gr}_{n-m}(\overline{\K}^n)$.

Let $Z:=\Z(I)$ and consider the variety
\begin{equation*}
    Y:=\{(V,x)\in\mathrm{Gr}_{n-m}(\K^n)\times (Z\backslash \{0\}):x\in V\}. 
\end{equation*}
Each fiber under the natural projection $\pi_2:Y\rightarrow Z\backslash \{0\}$ is isomorphic to the lower dimensional Grassmannian $\mathrm{Gr}_{n-m-1}(\K^{n-1})$. Since $I$ is nonzero it folows that $\dim Z\leq m-1$, so 
\begin{equation*}
    \dim Y\leq \dim \mathrm{Gr}_{n-m-1}(\K^{n-1})+\dim Z\leq(n-m)m-1<\dim \mathrm{Gr}_{n-m}(\K^n).
\end{equation*}
This shows that a generic element of $\mathrm{Gr}_{n-m}(\K^n)$ does not lie in the image of the projection $\pi_1:Y\rightarrow \mathrm{Gr}_{n-m}(\K^n)$, which finishes the proof.
\end{proof}
\end{lem}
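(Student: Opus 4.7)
The plan is a straightforward incidence-variety and dimension-count argument, the standard tool for showing that a generic linear subspace misses a given proper subvariety.

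First I would reduce to the case that $\K$ itself is algebraically closed. The Grassmannian $\mathrm{Gr}_{n-m}(\K^n)$ is irreducible and sits Zariski-densely inside $\mathrm{Gr}_{n-m}(\overline{\K}^n)$, so any nonempty Zariski open subset of the latter pulls back to a nonempty Zariski open subset of the former; it therefore suffices to establish the conclusion for generic $W$ defined over $\overline{\K}$. Next, set $Z := \Z(I) \subset X$. Since $X$ is irreducible of dimension $m$ and $I$ is a nonzero ideal in $\PP(X)$, the subvariety $Z$ is a proper closed subvariety of $X$, so $\dim Z \leq m - 1$.

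Now form the incidence variety
\begin{equation*}
Y := \{(W,x) \in \mathrm{Gr}_{n-m}(\K^n) \times (Z\setminus\{0\}) : x \in W\}
\end{equation*}
together with the two projections $\pi_1 : Y \to \mathrm{Gr}_{n-m}(\K^n)$ and $\pi_2 : Y \to Z\setminus\{0\}$. The fiber of $\pi_2$ over a nonzero point $x$ parametrizes the $(n-m)$-dimensional subspaces of $\K^n$ containing the line $\K x$, which by passing to the quotient $\K^n/\K x$ is identified with $\mathrm{Gr}_{n-m-1}(\K^{n-1})$ and so has dimension $(n-m-1)m$. Combining this with the bound on $\dim Z$ gives
\begin{equation*}
\dim Y \;\leq\; (n-m-1)m + (m-1) \;=\; (n-m)m - 1 \;<\; \dim \mathrm{Gr}_{n-m}(\K^n).
\end{equation*}
Hence $\pi_1$ is not dominant, and a generic $W$ lies in the complement of its image, which is exactly the statement that $W \cap (Z \setminus \{0\}) = \emptyset$.

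There is no real obstacle here. The only two points that require a moment of care are the reduction to the algebraic closure (so that geometric dimension counts really do produce Zariski open conditions over $\K$) and the appeal to irreducibility of $X$ to guarantee the strict inequality $\dim Z \leq m-1$; both are routine.
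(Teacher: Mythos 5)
Your argument is essentially identical to the paper's: the same reduction to $\overline{\K}$, the same incidence variety $Y \subset \mathrm{Gr}_{n-m}(\K^n) \times (Z\setminus\{0\})$, the same fibration over $Z\setminus\{0\}$ with $\mathrm{Gr}_{n-m-1}(\K^{n-1})$ fibers, and the same dimension count to show $\pi_1$ is not dominant. The only difference is that you write out the Grassmannian dimensions $(n-m-1)m$ and $(n-m)m$ explicitly, which is a minor expository choice.
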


Lastly, before proving Theorem \ref{main_thm}, we need the following crucial lemma:

\begin{lem}\label{denom_reduction}
Let $P,Q\in \PP(\K^n)$ two polynomials. Let $X$ be an affine nonsingular irreducible affine variety over $\K$, and let $\pi:X\rightarrow \K^n$ be a regular mapping, such that $Q\circ \pi$ is not identically equal to zero and $\frac{P\circ \pi}{Q\circ \pi}\in \PP(X)_{x_0}$ for some point $x_0\in X$. Let $\varphi:(X,x_0)\rightarrow \K^n$ be a germ of a regular mapping, such that for $1\leq i \leq n$ its $i$-th coordinate $\varphi_i$ satisfies
\begin{equation*}
    \varphi_i-\pi_i\in (Q\circ\pi)\mathfrak{m}_{x_0}\text{ in the ring }\PP(X)_{x_0}.
\end{equation*}
Then, $Q\circ\varphi$ is not identically equal to zero and the rational mapping $\frac{P\circ \varphi}{Q\circ \varphi}$ is regular at $x_0$.
\begin{proof}
Note that for $v,w\in\K^n$, the polynomials $P$ and $Q$ can be written as
\begin{align*}
    P(v+w)=P(v)+\sum_i w_i P_i(v,w),\\
    Q(v+w)=Q(v)+\sum_i w_i Q_i(v,w),
\end{align*}
for some $P_i,Q_i\in \PP(\K^n)$. Then
\begin{equation*}
    Q\circ\varphi(x)=Q\circ\pi(x)\left(1+\sum_i \frac{\varphi_i(x)-\pi_i(x)}{Q\circ\pi(x)} Q_i\big(\pi(x),\varphi(x)-\pi(x)\big)\right)
\end{equation*}
is a product of two nonzero elements of $\PP(X)_{x_0}$ and
\begin{equation*}
    \frac{P\circ\varphi(x)}{Q\circ\varphi(x)}=\frac{\frac{P\circ\pi(x)}{Q\circ\pi(x)}+\sum_i \frac{\varphi_i(x)-\pi_i(x)}{Q\circ \pi(x)} P_i\big(\pi(x),\varphi(x)-\pi(x)\big)}{1+\sum_i \frac{\varphi_i(x)-\pi_i(x)}{Q\circ\pi(x)} Q_i\big(\pi(x),\varphi(x)-\pi(x)\big)}\in\PP(X)_{x_0}.
\end{equation*}
\end{proof}
\end{lem}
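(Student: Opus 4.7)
The plan is to perform a first-order polynomial expansion of $P$ and $Q$ around $\pi$, and then exploit the divisibility hypothesis $\varphi_i-\pi_i\in(Q\circ\pi)\mathfrak{m}_{x_0}$ to cancel all denominators inside the local ring $\PP(X)_{x_0}$.

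First I would invoke the universal polynomial identity: for any $R\in\PP(\K^n)$ there exist polynomials $R_1,\dots,R_n\in\PP(\K^n\times\K^n)$ with
\[
    R(v+w)=R(v)+\sum_i w_i\,R_i(v,w)
\]
for all $v,w\in\K^n$, obtained by formally expanding $R$ in powers of the $w_i$'s. Applying this to both $P$ and $Q$ with $v=\pi(x)$ and $w=\varphi(x)-\pi(x)$, I can write $P\circ\varphi$ (resp. $Q\circ\varphi$) as $P\circ\pi$ (resp. $Q\circ\pi$) plus a sum over $i$ of terms $(\varphi_i-\pi_i)\cdot R_i(\pi,\varphi-\pi)$, each of which is a genuine regular function on a neighbourhood of $x_0$.

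Next, I would divide each expansion by $Q\circ\pi$, working inside the fraction field of $\PP(X)$. The hypothesis $\varphi_i-\pi_i\in(Q\circ\pi)\mathfrak{m}_{x_0}$ ensures that $(\varphi_i-\pi_i)/(Q\circ\pi)\in\mathfrak{m}_{x_0}$, so $(Q\circ\varphi)/(Q\circ\pi)$ takes the form $1+m$ with $m\in\mathfrak{m}_{x_0}$, hence is a unit of $\PP(X)_{x_0}$; in particular $Q\circ\varphi$ is not identically zero. Analogously, $(P\circ\varphi)/(Q\circ\pi)$ equals $(P\circ\pi)/(Q\circ\pi)$ plus an element of $\mathfrak{m}_{x_0}\cdot\PP(X)_{x_0}$, so it lies in $\PP(X)_{x_0}$ thanks to the assumed regularity of $(P\circ\pi)/(Q\circ\pi)$ at $x_0$.

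Finally, I would conclude by writing
\[
    \frac{P\circ\varphi}{Q\circ\varphi}=\frac{(P\circ\varphi)/(Q\circ\pi)}{(Q\circ\varphi)/(Q\circ\pi)},
\]
which expresses the rational function as a regular germ divided by a unit of $\PP(X)_{x_0}$, hence a regular germ at $x_0$. I do not anticipate any serious obstacle here: the argument rests entirely on the universal first-order polynomial expansion, and the divisibility assumption on $\varphi_i-\pi_i$ has been tailored exactly so that the factors of $Q\circ\pi$ absorb the new numerator contributions.
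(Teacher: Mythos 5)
Your proposal is correct and follows essentially the same route as the paper: expand $P$ and $Q$ to first order around $\pi$, use the hypothesis $\varphi_i-\pi_i\in(Q\circ\pi)\mathfrak{m}_{x_0}$ to clear the denominator $Q\circ\pi$, and observe that $(Q\circ\varphi)/(Q\circ\pi)$ is a unit of $\PP(X)_{x_0}$. The paper writes the final quotient as a single explicit fraction rather than phrasing it as ``regular germ divided by a unit,'' but this is only a cosmetic difference.
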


\section{Proof of the main theorem}

Theorem \ref{main_thm} follows easily from the next proposition. The proposition may be of some interest on its own.
\begin{prop}\label{main_prop}
Let $X\subset \K^n,Y\subset \K^k$ be two Zariski closed nonsingular irreducible subvarieties of $\K^n$ and $\K^k$ respectviely. Let $x_0$ be a point of $X$ and $F:\K^n\dashrightarrow Y$ be a rational mapping, such that all its coordinates belong to $\PP(\K^n)_{\I(X)}$, so that the restriction $F\vert_X:X\dashrightarrow Y$ makes sense as a rational mapping. Assume further that the restricted morphism is regular at $x_0$. Then, there exists a regular germ $G:(\K^n,x_0)\rightarrow Y$ such that $G\vert_X=F\vert_X$ as rational mappings.  

\begin{proof}
Set $m:=\dim X$. Write each coordinate $F_i$ of $F$ as $\frac{P_i}{Q_i}$ with $P_i,Q_i\in\PP(\K^n)$ and $Q_i\not\in\I(X)$. Set $q:=Q_1\dots Q_k\vert_X\in\PP(X)$. Without loss of generality $q(x_0)=0$, for otherwise we can take $G:=F$. Applying Proposition \ref{alg_prop} to the ideal $I=(q)$, we find a polynomial mapping $\sigma:X\times\K^{n-m}\rightarrow \K^n$ such that
\begin{enumerate}
    \item $\sigma(x,0)=x$ for $x\in X$,
    \item the derivative of $\sigma$ at $(x_0,0)$ is an isomorphism,
    \item the induced homomorphism
    \begin{equation*}
        \PP(\K^n)_{x_0}\rightarrow (\PP(X\times \K^{n-m})/(q)(t_1,\dots,t_{n-m}))_{\mathfrak{m}_{(x_0,0)}}
    \end{equation*}
    is surjective.
\end{enumerate}

Let $\pi:X\times\K^{n-m}\rightarrow \K^n$ be the projection $\pi(x,t)=x\in X\subset \K^n$. Since the image of $\pi$ is equal to $X$, from the assumptions on $F$ we have that for $1\leq i \leq k$ the polynomial $Q_i\circ \pi$ is not identically equal to zero and $\frac{P_i\circ \pi}{Q_i\circ \pi}\in\PP(X)_{x_0}$.

Using the property $(3)$ of $\sigma$, for $1\leq i\leq n$ we can find a germ of a regular function $\psi_i\in\PP(\K^n)_{x_0}$ such that
\begin{equation*}
    \psi_i\circ\sigma-\pi_i\in (q)(t_1,\dots,t_{n-m})\text{ in the ring }\PP(X\times\K^{n-m})_{(x_0,0)}.
\end{equation*}
These together give a regular germ $\psi:(\K^n,x_0)\rightarrow \K^n$.

If we now define $\varphi:(X\times\K^{n-m},(x_0,0))\rightarrow \K^n$ as $\varphi:=\psi\circ\sigma$, then from Lemma \ref{denom_reduction} we deduce that for $1\leq i \leq k$ we have that $Q_i\circ\varphi$ is not identically equal to zero and $\frac{P_i\circ\varphi}{Q_i\circ\varphi}\in\PP(X\times \K^{n-m})_{(x_0,0)}$. As the derivative of $\sigma$ is an isomorphism at $(x_0,0)$, from Observation \ref{obs_flat} we get that $Q_i\circ \psi$ is not identically equal to zero and $\frac{P_i\circ \psi}{Q_i\circ \psi} \in \PP(\K^n)_{x_0}$.

We can now define $G:\K^n\dashrightarrow Y$ as $G:=F\circ \psi$. We have just verified that $G$ is well defined and regular on a neighbourhood of $x_0$. 

Since all coordinates of $\psi\circ\sigma$ are congruent to the respective coordinates of $\pi$ modulo $(t_1,\dots,t_{n-m})$, it follows that for points $x\in X$ we have $\psi\circ \sigma(x,0)=\pi(x,0)$ whenever the left hand side is defined. Hence, if moreover $x$ is not contained in the zero set of $q$ then
\begin{equation*}
	G(x)=F\circ\psi(x)=F\circ\psi\circ\sigma(x,0)=F\circ\pi(x,0)=F(x).
\end{equation*}
This shows that $G\vert_X=F\vert_X$.
\end{proof}
\end{prop}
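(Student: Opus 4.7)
The plan is to construct $G$ as a composition $F\circ \psi$, where $\psi:(\K^n,x_0)\to \K^n$ is a regular germ designed to restrict to the identity on $X$ near $x_0$ (thereby preserving $F|_X$) while perturbing the transverse directions enough to enter the domain of regularity of $F$. The difficulty is that arbitrarily close to $x_0$ and off $X$ the polar locus of $F$ may be dense, so the correction $\psi$ must be tailored to the denominators of $F$.

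First reduce to the singular case: write each coordinate as $F_i=P_i/Q_i$ with $Q_i\notin\I(X)$ and set $q:=\prod_i Q_i|_X\in\PP(X)$. If $q(x_0)\neq 0$ then $F$ is already regular at $x_0$ and $G=F$ works. Otherwise apply Proposition \ref{alg_prop} to the principal ideal $I=(q)$ to produce a polynomial map $\sigma:X\times\K^{n-m}\to\K^n$ with $\sigma(x,0)=x$, whose derivative at $(x_0,0)$ is an isomorphism, and such that the induced homomorphism
\begin{equation*}
    \PP(\K^n)_{x_0}\longrightarrow \bigl(\PP(X\times\K^{n-m})/(q)(t_1,\ldots,t_{n-m})\bigr)_{\mathfrak{m}_{(x_0,0)}}
\end{equation*}
is surjective. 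The geometric content of this surjectivity is that every regular germ on $X\times\K^{n-m}$ is, modulo the denominator ideal, a pullback from $\K^n$ via $\sigma$. Apply it coordinate-by-coordinate to the projection $\pi:X\times\K^{n-m}\to X\subset\K^n$ to obtain regular germs $\psi_i\in\PP(\K^n)_{x_0}$ with $\psi_i\circ\sigma-\pi_i\in (q)(t_1,\ldots,t_{n-m})$, and bundle them as $\psi:=(\psi_1,\ldots,\psi_n)$.

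This congruence, together with the divisibility $Q_i\mid q$, places $\psi\circ\sigma$ in the hypothesis of Lemma \ref{denom_reduction} for each pair $(P_i,Q_i)$, so $F\circ\psi\circ\sigma$ is regular at $(x_0,0)$. Since $\sigma$ has invertible derivative there, Observation \ref{obs_flat} lets us descend the regularity and the divisibility through $\sigma$: each $Q_i\circ\psi$ is nonzero and $P_i\circ\psi / Q_i\circ\psi\in\PP(\K^n)_{x_0}$, so $G:=F\circ\psi$ is a regular germ at $x_0$ in $\K^n$. Finally, restricting $\psi\circ\sigma\equiv\pi \pmod{(q)(t_1,\ldots,t_{n-m})}$ to $t=0$ gives $\psi(x)=x$ as germs on $X$, whence $G(x)=F(\psi(x))=F(x)$ generically on $X$, i.e., $G|_X=F|_X$ as rational maps.

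The main obstacle is fabricating the map $\sigma$: one simultaneously wants $\sigma$ to be tangentially invertible at $(x_0,0)$, so that Observation \ref{obs_flat} can transport regularity from the thickened variety back to $\K^n$, and rich enough that the projection $\pi$ is expressible as a pullback from $\K^n$ modulo the denominator ideal. Both requirements are handled by Proposition \ref{alg_prop}, whose proof leverages Noether normalization and Nakayama's lemma against an integral ring extension built from the transverse factor $\K^{n-m}$; granting that proposition, the present argument is essentially a bookkeeping of how denominators propagate under the three substitutions $\pi$, $\sigma$, and $\psi$.
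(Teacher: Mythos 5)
Your proposal is correct and follows essentially the same route as the paper: reduce to $q(x_0)=0$, invoke Proposition \ref{alg_prop} with $I=(q)$ to build $\sigma$, solve for $\psi$ via the surjectivity in point (3), propagate regularity through Lemma \ref{denom_reduction} and Observation \ref{obs_flat}, and set $G:=F\circ\psi$. The bookkeeping of how $Q_i\mid q$ feeds the hypothesis of Lemma \ref{denom_reduction}, and the final check that $\psi$ restricts to the identity on $X$, match the paper's argument.
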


\begin{proof}[Proof of Theorem \ref{main_thm}]
Let $X$ be a nonsingular retract rational variety, and let $x_0$ be a point of $X$. Retract rational varieties are irreducible, and the property of being uniformly retract rational is local, so passing to a neighbourhood we can assume that $X$ is embedded in $\K^n$ as a Zariski closed irreducible subvariety. Let $V\subset X$ and $U\subset \K^m$ be two nonempty Zariski open sets with regular mappings $i:V\rightarrow U$ and $r:U\rightarrow V$, such that their composition $r\circ i$ is the identity on $V$. Treating each of the coordinates separately, $i$ can be extended to a mapping $i_0:W\rightarrow U$, where $W$ is a Zariski open subset of $\K^n$ containing $V$ as a subset. Applying Proposition \ref{main_prop} to the rational mapping $F:=r\circ i_0:\K^n\dashrightarrow X$, we find a regular germ 
\begin{equation*}
    G:(\K^n,x_0)\rightarrow X
\end{equation*}
such that $G\vert_X$ is equal to the identity as a germ at $x_0$. Finally, taking $W'$ to be some Zariski open neighbourhood of $x_0$ on which $G$ is defined and setting $V':=X\cap W'$ and $U'=G^{-1}(V')$, we get that $G\vert_{U'}$ is a regular retraction from $U'$ to $V'$.
\end{proof}

\bibliographystyle{plain}
\bibliography{references}
\end{document}